\newcommand{\mbR}{\mathbb{R}}
\newcommand{\mbZ}{\mathbb{Z}}
\newcommand{\mbQ}{\mathbb{Q}}
\def\mbN{\mathbb{N}}
\def\mbP{\mathbb{P}}
\def\mfD{\mathfrak{D}}
\newcommand{\<}{\leq}
\def\>{\geq}
\def\ve{\varepsilon}
\def\vphi{\varphi}
\def\subset{\subseteq}
\newcommand{\lrd}{\lfloor}
\newcommand{\rrd}{\rfloor}
\newcommand{\lru}{\lceil}
\newcommand{\rru}{\rceil}
\newcommand{\bir}{\dashrightarrow}
\def\mcO{\mathcal{O}}
\def\mfC{\mathfrak{C}}
\newtheorem{theorem}{Theorem}[section]
\newtheorem{lemma}[theorem]{Lemma}
\newtheorem{proposition}[theorem]{Proposition}
\newtheorem{corollary}[theorem]{Corollary}
\newtheorem{conjecture}[theorem]{Conjecture}
\theoremstyle{remark}
\newtheorem{remark}[theorem]{Remark}
\theoremstyle{definition}
\newtheorem{definition}[theorem]{Definition}
\theoremstyle{definition}
\numberwithin{equation}{section}
\def\div{\operatorname{div}}
\def\Supp{\operatorname{Supp}}
\def\chr{\operatorname{char}}
\def\Ex{\operatorname{Ex}}
\def\vol{\operatorname{vol}}
\def\coeff{\operatorname{coeff}}
\def\nt{\operatorname{nt}}
\def\cent{\operatorname{center}}
\author{Omprokash Das}
\address{School of Mathematics\\
Tata Institute of Fundamental Research\\
Homi Bhabha Road, Navy Nagar\\
Colaba, Mumbai 400005}
\email{omprokash@gmail.com}
\email{omdas@math.tifr.res.in}
\date{}
\begin{document}
\title[Pluricanonical maps of surfaces of log-general type]{Boundedness of log-pluricanonical maps for surfaces of log-general type in positive characteristic}

\keywords{Pluri-canonical maps, Birational boundedness, Surfaces of log-general type, positive characteristic.}
\subjclass[2020]{14J29, 14E05, 14E30}
\maketitle

\begin{abstract}
	In this article we prove the following boundedness result: Fix a DCC set $I\subset [0, 1]$. Let $\mfD$ be the set of all log pairs $(X, \Delta)$ satisfying the following properties: (i) $X$ is a projective surface defined over an algebraically closed field, (ii) $(X, \Delta)$ is log canonical and the coefficients of $\Delta$ are in $I$, and (iii) $K_X+\Delta$ is big. Then there is a positive integer $N=N(I)$ depending only on the set $I$ such that the linear system $|\lrd m(K_X+\Delta)\rrd|$ defines a birational map onto its image for all $m\>N$ and $(X, \Delta)\in\mfD$.
\end{abstract}

\tableofcontents

\section{Introduction}
Pluricanonical system (which determines the Kodaira dimension) of a variety is one of the fundamental birational invariants used in the classification theory of algebraic varieties. So understanding pluricanonical maps is of great importance. If $X$ is a variety of general type, then by definition the pluricanonical map $\phi_{rK_X}\bir \mbP(H^0(X, \mcO_X(rK_X)))$ is birational (onto its image) for all sufficiently large $r$. It is a natural question to then ask if there is an integer $r_n$ such that $\phi_{rK_X}$ is birational for all $r\>r_n$, uniformly for all varieties of general type of dimension $n$. When $X$ is a smooth curve of genus $g\>2$, it is easy to see that $\phi_{rK_X}$ is birational for all $r\>3$. When $X$ is a smooth surface of general type and the characteristic of the ground field is $0$, Bombieri showed in \cite{Bom70} that $\phi_{rK_X}$ is birational for all $r\>5$. The same result was later proved in characteristic $p>0$ by Ekedahl in \cite{Eke88}. Starting with dimension $\>3$ this becomes a very hard problem to study, and several partial cases were known in characteristic $0$ due to \cite{Han85, Ben86, Mat86, Che01, Kol86, Luo94, Luo00}. In 2006, Hacon and McKernan \cite{HM06}, and independently Takayama \cite{Tak06} using ideas of Tsuji \cite{Tsu06} made a breakthrough on this problem in all dimensions $\>3$. They proved that for any fixed positive integer $n$, there is another positive integer $r_n$ depending only on $n$ such that $\phi_{rK_X}$ is birational for all $r\>r_n$ and for all smooth projective varieties $X$ of general type of dimension $n$. A similar result in positive characteristic is unknown even in dimension $3$.\\

On the other hand there is an analogous problem for log pairs with wider range of applications, it says the following:
\begin{conjecture}\label{con:birational-boundedness}
	Fix a positive integer $n$, a DCC set $I\subset [0, 1]\cap\mbQ$. Let $\mfD$ be a collection of log pairs satisfying the following properties:
	\begin{enumerate}
		\item X is a projective variety of dimension $n$ defined over an algebraically closed field,
		\item $(X, \Delta)$ is log canonical and the coefficients of $\Delta$ are contained in the set $I$, and
		\item $K_X+\Delta$ is big.
	\end{enumerate}
	Then there is a positive integer $N=N(n, I)$ depending only on $n$ and the set $I$ such that the linear system $|\lrd m(K_X+\Delta)\rrd|$ defines a birational map onto its image for all $m\>N$ and $(X, \Delta)\in\mfD$.
\end{conjecture}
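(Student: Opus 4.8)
We outline a strategy toward Conjecture~\ref{con:birational-boundedness}, modeled on the inductive scheme of Hacon--McKernan \cite{HM06} and Takayama \cite{Tak06} (building on Tsuji \cite{Tsu06}) together with the descending-chain-condition machinery for volumes of Hacon--McKernan--Xu, but with every appeal to vanishing theorems replaced by a Frobenius/trace argument. Fix $n$ and the DCC set $I$, and induct on $n$; the cases $n=1$ and $n=2$ are known, the latter being the main theorem of this paper. Write $v(X,\Delta)=\vol(X,K_X+\Delta)$. \textbf{Step 1 (volumes bounded below).} One first shows there is $v_0=v_0(n,I)>0$ with $v(X,\Delta)\ge v_0$ for all $(X,\Delta)\in\mfD$; equivalently, the set of such volumes satisfies the DCC. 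In characteristic $0$ this is the theorem of Hacon--McKernan--Xu, deduced from the ACC for log canonical thresholds together with a limiting argument on minimal models. In characteristic $p$ one needs as inputs (i) a usable form of (log) resolution of singularities and (ii) the existence of log canonical models, or at least termination results, for log canonical pairs of dimension $n$; both are available for $n\le 3$ when $p>5$ and are expected in general. Granting these, the DCC of volumes follows essentially verbatim.

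\textbf{Step 2 (a lower bound on the volume yields birational separation of two general points).} After passing to a log canonical model we may assume $K_X+\Delta$ is big and semiample-like. For general $x,y\in X$, the estimate $v(X,\Delta)\ge v_0$ together with a standard dimension count produces, once $m^n v_0$ exceeds a constant depending only on $n$, an effective $\bQ$-divisor $D\sim_{\bQ}\lambda(K_X+\Delta)$ with $\lambda<1$ such that the pair $(X,\Delta+D)$ has a minimal non-klt center $Z$ through $x$ with $y\notin\Supp D$ (tie-breaking and perturbation of $D$). In characteristic $0$ one invokes Nadel vanishing to lift, off the non-klt locus, a section of $\mcO_X(\lrd m(K_X+\Delta)\rrd)$ vanishing at $y$ but not at $x$. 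In characteristic $p$ this step has no vanishing-theorem proof; instead one uses the trace of Frobenius $F^e_*\mcO_X\big((1-p^e)(K_X+\Delta+D)\big)\to\mcO_X$ and the theory of $F$-adjunction to produce, for $e\gg 0$, sections of $\mcO_X(m(K_X+\Delta))$ separating $x$ from $y$, under the hypothesis that the perturbed pair is sharply $F$-pure along $Z$. Arranging and propagating $F$-purity of $(X,\Delta+D)$ through the perturbation is the delicate point.

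\textbf{Step 3 (descent in dimension, and upgrading).} When $Z$ has dimension $d<n$, restrict: by an $F$-adjunction statement $(K_X+\Delta+D)|_Z=K_Z+\Delta_Z$ with $(Z,\Delta_Z)$ log canonical and the coefficients of $\Delta_Z$ in a DCC set $I'=I'(n,I)$, while $v(Z,\Delta_Z)$ is bounded below in terms of $v_0$ and $m$. The induction hypothesis makes $|\lrd \ell(K_Z+\Delta_Z)\rrd|$ birational for $\ell\ge N(d,I')$; a lifting statement $H^0(X,\mcO_X(m(K_X+\Delta)))\to H^0(Z,\mcO_Z(\cdots))$, again obtained by a Frobenius-trace argument in place of Kawamata--Viehweg, transports birational separation on $Z$ to birational separation of the general pair $x,y\in X$ by $|\lrd m(K_X+\Delta)\rrd|$ for $m$ bounded in terms of $N(d,I')$ and $v_0$. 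A standard Noetherian-induction argument (treating the proper closed locus where separation fails by the same scheme applied to its general fibre, as in \cite{HM06}) promotes ``separates two general points'' to ``defines a birational map onto its image'', and taking $N(n,I)$ to be the maximum of the finitely many bounds that appear closes the induction.

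\textbf{Main obstacle.} The crux is the twofold failure of Kawamata--Viehweg and Nadel vanishing in characteristic $p$: they are used to lift sections off non-klt centers (Step 2) and to control restriction to $Z$ (Step 3). The only substitutes are Frobenius-trace and test-ideal methods, and at present these yield the required global generation and extension statements only when one has strong control of $F$-singularities together with resolution and the MMP --- available in dimension $\le 3$ for $p>5$, but open in general. A second obstacle is the absence of log resolution and of the full MMP (existence of log canonical models, termination of flips) in dimension $\ge 4$, on which Steps 1 and 3 rest. Consequently the argument above is unconditional for $n=2$ (the content of this paper), conditional on threefold resolution and MMP for $n=3$, and conditional on both the higher-dimensional MMP and a characteristic-$p$ vanishing/extension theorem in general.
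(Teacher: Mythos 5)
This statement is a \emph{conjecture}; the paper does not prove it in general, and neither do you. What the paper actually proves is the $n=2$ case (Theorem~\ref{thm:main-theorem}), and your sketch explicitly concedes it is only ``unconditional for $n=2$,'' so at best you are proposing a conditional roadmap for $n\ge 3$. Reviewing it as such, the crucial point is that your roadmap is the Tsuji/Hacon--McKernan/Takayama/HMX route with vanishing theorems replaced by Frobenius trace maps, whereas the paper's actual proof of the base case $n=2$ goes by a completely different, characteristic-free mechanism: pass to a log smooth klt model, use Alexeev's boundedness of non-redundant components together with the Hacon--Kov\'acs / Alexeev--Mori ``$K_X+(1-\delta)\Delta$ still big'' result to round the coefficients into a fixed finite set $J=\{\ell/k\}$; terminalize; bound the number of exceptional divisors of negative discrepancy over a minimal model; deduce a uniform Cartier index via \cite[Lemma 2.6]{HK19}; and finish with effective Matsusaka (\cite[Corollary 1.14]{HK19}). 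No covering families of tigers, no lifting off non-klt centers, no Frobenius trace.

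Beyond the difference in method, there are genuine gaps in your sketch that you understate. In Step 1, the ACC for log canonical thresholds and the DCC of volumes are theorems of Hacon--McKernan--Xu in characteristic $0$; in characteristic $p$ these are open (even for surfaces the proof is of a different flavor, and in dimension $\ge 3$ they are not known even assuming resolution and the MMP). Saying the DCC of volumes ``follows essentially verbatim'' granting (i) and (ii) is not accurate: the HMX proof also uses Kawamata--Viehweg/Nadel vanishing and generic smoothness of fibrations, which fail. In Step 2, the phrase ``under the hypothesis that the perturbed pair is sharply $F$-pure along $Z$'' hides the principal difficulty: one cannot in general arrange $F$-purity by a small perturbation of a big $\mbQ$-divisor, and there is currently no Frobenius-theoretic analogue of the tie-breaking/lifting-from-a-non-klt-center machinery strong enough to give effective birationality even for threefolds with $p>5$. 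In Step 3, $F$-adjunction to a minimal non-klt center is not known to yield a boundary with coefficients in a controlled DCC set (the char-$0$ statement uses the Kawamata subadjunction theorem, which has no positive-characteristic counterpart). So the obstacles you flag are not merely ``delicate points'' to arrange; each one is presently a missing theorem. This is precisely why the paper proves the surface case by the boundedness-of-stable-pairs route rather than the one you outline.
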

In dimension $2$ and characteristic $0$ this is proved by Todorov in \cite[Corollary 6.1]{Tod10}. In general in all higher dimensions and in characteristic $0$ this is proved by Hacon, McKernan and Xu in their paper \cite[Theorem C]{HMX14} as a part of their inductive arguments in the proof of the ACC property for log canonical thresholds. In this article we prove this conjecture for surfaces in positive characteristic. We note that our proof is characteristic free. More specifically we prove the following:
	\begin{theorem}\label{thm:main-theorem}
		Fix a DCC set $I\subset [0, 1]\cap\mbQ$. Let $\mfD$ be the set of all pairs $(X, \Delta)$ satisfying the following properties:
		\begin{enumerate}
			\item $X$ is a projective surface defined over an algebraically closed field,
			\item $(X, \Delta)$ is log canonical and the coefficients of $\Delta$ are contained in $I$, and
			\item $K_X+\Delta$ is big.
		\end{enumerate}
	Then there exists a positive integer $N=N(I)$ depending only on the set $I$ such that the linear system $|\lrd m(K_X+\Delta)\rrd|$ defines a birational map onto its image for all $m\>N$ and $(X, \Delta)\in\mfD$.	
		
	\end{theorem}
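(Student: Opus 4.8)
The strategy is to combine the minimal model program for surfaces --- available in every characteristic --- with boundedness results for log surfaces of general type, circumventing the failure of Kawamata--Viehweg vanishing by exploiting that $\dim X=2$. Running a $(K_X+\Delta)$-MMP and passing to the log canonical model (which exists for surfaces in all characteristics), one may assume $K_X+\Delta$ is ample after replacing $(X,\Delta)$; birationality of $|\lrd m(K_X+\Delta)\rrd|$ on the log canonical model implies it on the original pair. Writing $\lrd m(K_X+\Delta)\rrd=m(K_X+\Delta)-\{m\Delta\}$, the rounding error is effective and supported in $\Supp\Delta$, and it suffices to produce a single $N=N(I)$; the arguments below give $N$ so that birationality holds for all $m\>N$.

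\textbf{Step 2 (two inputs).} I would invoke two boundedness-type facts, both positive characteristic analogues of statements known in characteristic $0$ from the work of Alexeev and of Hacon--McKernan--Xu: (a) the set $\{\vol(K_X+\Delta):(X,\Delta)\in\mfD\}$ satisfies the DCC --- in particular there is $v_0=v_0(I)>0$ with $\vol(K_X+\Delta)\>v_0$ for every pair in $\mfD$; and (b) for each $v_1>0$ the pairs with $\vol(K_X+\Delta)\<v_1$ form a log bounded family, fitting in a projective morphism $(\mathcal{X},\mathcal{D})\to T$ with $T$ of finite type. Both rest on boundedness of log canonical surface singularities together with the char-free surface MMP.

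\textbf{Step 3 (bounded volume).} Suppose $\vol(K_X+\Delta)\<v_1$. Then $(X,\Delta)$ occurs in the family of (b). By Noetherian induction on $T$, stratifying so that the (finitely many, by ACC for log canonical thresholds) coefficients occurring and the relevant numerical invariants are locally constant, and running the MMP in the family to reach the relative log canonical model, one reduces to bounding $m$ on each stratum; semicontinuity of $h^0$ together with an effective base-point-freeness / effective Matsusaka statement for surfaces in positive characteristic then gives a uniform $N_1=N_1(I,v_1)$.

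\textbf{Step 4 (large volume, and the main obstacle).} Suppose $\vol(K_X+\Delta)>v_1$ with $v_1$ chosen large. For general points $x,y\in X$, the largeness of the volume lets one construct --- by a Riemann--Roch/multiplicity count on a resolution --- an effective $\mbQ$-divisor $\Gamma\sim_\mbQ t(K_X+\Delta)$ with $t<1$ and large multiplicity at $x$ and $y$; after a tie-breaking perturbation, $(X,\Delta+\Gamma)$ is log canonical with $\{x\}$ a minimal non-klt centre and $y$ in the non-klt locus. One then wants to lift a section of $\mcO_X(\lrd m(K_X+\Delta)\rrd)$ nonzero at $x$ and vanishing at $y$. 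In characteristic $0$ this is Nadel vanishing; in characteristic $p$ this is the crux, since Kawamata--Viehweg vanishing can fail on surfaces. I would resolve it by a dimension-two device: either replace the multiplier ideal by the test ideal $\tau(X,\Delta+\Gamma)$ and use Frobenius/$F$-adjunction vanishing, or pass to a log resolution $Y\to X$ and control the relevant $H^1$ --- of a divisor numerically close to $K_Y$ plus a big and nef class --- using that the quotient sheaf whose sections must be hit is supported in dimension $0$ and that $\chi(\mcO_Y)$ is bounded below for surfaces of log general type. This yields a uniform $N_2=N_2(I)$, and $N=\max\{N_1,N_2\}$ finishes the proof. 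I expect Step 4 to be the main obstacle: the auxiliary statements of Step 2, while indispensable, are within reach of existing char-free methods, whereas the absence of Kawamata--Viehweg vanishing genuinely forces a new argument for separating general points.
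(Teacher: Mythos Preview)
Your proposal follows the characteristic-zero template of Todorov and Hacon--McKernan--Xu: a volume dichotomy, log boundedness for small volume, and separation of general points via non-klt centres for large volume. The paper takes a completely different route, engineered precisely to avoid any vanishing theorem.

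The paper's argument runs as follows. After passing to a log smooth klt model, one invokes Alexeev's result that there is $\delta=\delta(I)$ with $K_X+(1-\delta)\Delta$ still big, and uses this to round the coefficients of $\Delta$ down into a fixed \emph{finite} set $J=\{\ell/k:1\le\ell\le k-1\}$ while preserving bigness. After a terminalisation that keeps the coefficients in $J$, every pair is uniformly $\varepsilon$-klt, and Alexeev's bound on non-redundant components gives a uniform bound $A=A(J,\varepsilon)$ on $\nt(\Delta)$. One then runs the MMP to a minimal model $(X'',\Delta'')$; since the starting pair was terminal, every exceptional divisor of negative discrepancy over $X''$ has centre a component of $\Delta$, so their number is at most $A$. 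A lemma of Hacon--Kov\'acs then bounds the Cartier index of $K_{X''}+\Delta''$ by some $N=N(I)$, and effective Matsusaka for surfaces (valid in all characteristics) makes $|K_{X''}+qN(K_{X''}+\Delta'')|$ birational for all $q\ge 18$; a short arithmetic lemma upgrades this to all $m\ge N^2+1$. No multiplier or test ideal, no non-klt centre, and no cohomology vanishing appears anywhere.

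Your Step~4 is, as you anticipate, a genuine gap rather than a technicality. Neither workaround you sketch is known to succeed: the test-ideal route requires $F$-singularity hypotheses (global $F$-regularity or the like) that are not available for an arbitrary $\varepsilon$-klt surface pair in characteristic $p$, and test ideals can differ from multiplier ideals in ways that break the centre construction; the ``control $H^1$ via a lower bound on $\chi(\mcO_Y)$'' idea has no mechanism linking $\chi$ to the specific $H^1$ obstructing the extension, which depends on the auxiliary divisor $\Gamma$ and hence on the chosen points. The paper's introduction explicitly names the failure of Nadel vanishing (and of generic smoothness, needed for McKernan's covering families) as the reason this entire line of attack is abandoned. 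Your Step~3 also hides a circularity: ACC for log canonical thresholds does not imply that only finitely many coefficients from $I$ occur in a bounded family; what is needed is the global ACC, whose characteristic-zero proof in \cite{HMX14} uses the very birational boundedness statement you are trying to establish.
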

Conjecture \ref{con:birational-boundedness} is closely related to the boundedness problem of stable pairs, which in positive characteristic is known in dimension $2$ due to \cite{Ale94, AM04} and \cite{HK19}. We note that our Theorem \ref{thm:main-theorem} is not a corollary of the main results of these three papers. However, we do use some of the tools and techniques developed in those papers.\\
In characteristic $0$, one of the main tools used to prove Theorem \ref{thm:main-theorem} in dimension $2$ and higher is the McKernana's `\emph{Covering family of tigers}' \cite{Mck02}, for example, it is used in the proofs of \cite[Theorem 6.1]{Tod10} and \cite[Theorem C]{HMX14}. However, McKernan's technique makes use of Nadel vanishing theorem and generic smoothness theorem, both of which are known to fail in positive characteristic. Our method avoid use of both of these two theorems.\\

\textbf{Idea of the proof:} First passing to an appropriate log resolution we reduce the problem to a log smooth klt pair $(X, \Delta)$. Next using a theorem from \cite{Ale94} we reduce the problem to the case where the set $I$ is a finite set given by $I=\{\frac{i}{k}\; :\; i=1, 2,\ldots, k-1\}$, where $k$ is a fixed constant independent of the boundary divisors $\Delta$. At this stage using an argument of Alexeev \cite{Ale94} we also prove that the number of components of $\Delta$ is uniformly bounded by some positive integer $M=M(I)$ which depends only on the set $I$. Then we run a $(K_X+\Delta)$-MMP and obtain a minimal model $(X', \Delta')$. Next we show that the number of exceptional divisors $E$ over $X'$ with discrepancy $a(E, X', \Delta')<0$ is bounded above by the same constant $M$. Then by a result from \cite{HK19} and \cite{Ale94} there exists a positive integer $N=N(I)$ depending only on the set $I$ such that $N(K_{X'}+\Delta')$ is Carier for all pairs $(X', \Delta')$. Then by another lemma from \cite{HK19} (which is an application of the effective Matsusaka theorem) it follows that there is a positive integer $m_0=m_0(I)$ depending only on the set $I$ such that the linear system $|\lrd m(K_{X'}+\Delta')\rrd|$ gives a birational map onto its image for all pairs $(X', \Delta')$. Pulling back this linear system onto $X$ gives our result.\\

\noindent
{\it Acknowledgement.} The author would like to thank Christopher Hacon for answering his questions.

\section{Preliminaries}
Throughout the paper we work over \textbf{algebraically closed} fields of arbitrary characteristic, i.e. $\chr p\>0$.

\begin{definition}\label{def:singularities}
Let $X$ be a normal variety and $\Delta$ a $\mbQ$-divisor on $X$. If the coefficients of $\Delta$ are contained in the interval $[0, 1]$, then $\Delta$ is called a \emph{boundary} divisor. By \emph{log pair} $(X, \Delta)$ we mean that $\Delta$ is a boundary divisor and $K_X+\Delta$ is $\mbQ$-Cartier. For a log pair $(X, \Delta)$ we define terminal, \emph{canonical, klt, plt, dlt} and \emph{log canonical} or \emph{lc} singularities as in \cite[Definition 2.8]{Kol13}. Fix a real number $\ve>0$, for the defintion of $\ve$-klt and $\ve$-lc see \cite[Definition 1.5]{Ale94}. By a \emph{log smooth} pair $(X, \Delta)$ we mean that $X$ is smooth and $\Delta$ has simple normal crossing support.	
\end{definition}

\begin{definition}\label{def:rounding}
Let $x$ be a real number. We define $\lrd x\rrd$ as the \emph{largest integer} $\<x$ and $\lru x\rru$ as the \emph{smallest integer} $\>x$. Note that every real number $x$ satisfies $0\<x-\lrd x\rrd<1$. For an $\mbR$-divisor $D=\sum_{i=1}^n a_iD_i$, we define $\lrd D\rrd:=\sum\lrd a_i\rrd D_i$ and $\lru D\rru:=\sum\lru a_i\rru D_i$. For $D=\sum_{i=1}^na_iD_i$, we also define $I=\{1, 2,\ldots, n\}$, $I^{=1}=\{i\in I\;:\; a_i=1\}$ and $I^{<1}=\{i\in I\; :\; a_i<1\}$. Then we define the divisors $D^{=1}$ (resp. $D^{<1}$) as $D^{=1}:=\sum_{i\in I^{=1}} D_i$ (resp. $D^{<1}:=\sum_{i\in I^{<1}}a_iD_i$). If the coefficients of $D$ are contained in the interval $[0, 1]$, then $D$ has a unique decomposition as $D=D^{<1}+D^{=1}$.
\end{definition}

\begin{remark}\label{rmk:rounding-property}
Note that if $x\in\mbR$ and $n\in\mbZ$, then from the definition of $\lrd x\rrd$ it follows that $x+n\>0$ if and only if $\lrd x\rrd+n\>0$.	
\end{remark}

\section{Lemmas and Propositions}
In this section we will collect some important and useful results which will be needed in the next section for proving the main theorem.\\ 

The following lemma and its corollary will be used in the poof of various results throughout the paper without reference.

\begin{lemma}\label{lem:projection-formula}
Let $f:X\to Y$ be a proper birational morphism between two normal varieties. Let $D$ be a $\mbQ$-Cartier $\mbQ$-divisor on $Y$. Then $f_*\mcO_X(\lrd mf^*D\rrd+E)\cong\mcO_Y(\lrd mD\rrd)$ for all integer $m\>1$ and effective exceptional divisor $E\>0$.	
\end{lemma}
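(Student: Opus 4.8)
The plan is to check the asserted isomorphism directly on sections, viewing both sheaves as subsheaves of the constant sheaf $K(Y)=K(X)$ under the birational identification induced by $f$. Since $X$ and $Y$ are normal, for any open $U\subseteq Y$ one has $\Gamma(U,\mcO_Y(\lrd mD\rrd))=\{h\in K(Y)^\times : \div_Y(h)+\lrd mD\rrd\geq 0 \text{ on } U\}\cup\{0\}$, while $\Gamma(U, f_*\mcO_X(\lrd mf^*D\rrd+E))=\Gamma(f^{-1}(U),\mcO_X(\lrd mf^*D\rrd+E))=\{h\in K(X)^\times : \div_X(h)+\lrd mf^*D\rrd+E\geq 0 \text{ on } f^{-1}(U)\}\cup\{0\}$. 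So it suffices to prove, for every nonzero rational function $h$,
\[ \div_Y(h)+\lrd mD\rrd\geq 0 \text{ on } U \iff \div_X(h)+\lrd mf^*D\rrd+E\geq 0 \text{ on } f^{-1}(U), \]
after which the induced identification of section modules is visibly compatible with restriction and hence sheafifies to the desired isomorphism.

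For the implication $\Rightarrow$: adding the effective $\mbQ$-divisor $mD-\lrd mD\rrd$ (all of whose coefficients lie in $[0,1)$) to $\div_Y(h)+\lrd mD\rrd\geq 0$ shows that $\div_Y(h)+mD$ is an effective $\mbQ$-Cartier $\mbQ$-divisor on $U$; since $\div_X(h)=f^*(\div_Y(h))$ (the pullback of a principal divisor is principal) and pullback preserves effectivity of $\mbQ$-Cartier divisors, we get $\div_X(h)+mf^*D\geq 0$ on $f^{-1}(U)$. Now for each prime divisor $P$ meeting $f^{-1}(U)$, the integer $\coeff_P(\div_X(h))$ satisfies $\coeff_P(\div_X(h))\geq -\coeff_P(mf^*D)$, so by Remark \ref{rmk:rounding-property} (applied coefficient by coefficient) $\coeff_P(\div_X(h))\geq -\lrd\coeff_P(mf^*D)\rrd=-\coeff_P(\lrd mf^*D\rrd)$; hence $\div_X(h)+\lrd mf^*D\rrd\geq 0$ on $f^{-1}(U)$, and a fortiori $\div_X(h)+\lrd mf^*D\rrd+E\geq 0$. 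For $\Leftarrow$: since $Y$ is normal, $f$ is an isomorphism over the complement of a closed subset of codimension $\geq 2$, so every prime divisor $Q$ of $Y$ meeting $U$ has a strict transform $\tilde Q$ on $X$ meeting $f^{-1}(U)$, along which $\coeff_{\tilde Q}(\div_X(h))=\coeff_Q(\div_Y(h))$, $\coeff_{\tilde Q}(\lrd mf^*D\rrd)=\coeff_Q(\lrd mD\rrd)$, and $\coeff_{\tilde Q}(E)=0$ as $E$ is exceptional; evaluating the hypothesis along all such $\tilde Q$ recovers exactly $\div_Y(h)+\lrd mD\rrd\geq 0$ on $U$.

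The essential content—and the only step requiring any care—is the rounding argument in the $\Rightarrow$ direction: one must pass from effectivity of $\div_X(h)+mf^*D$ to effectivity of $\div_X(h)+\lrd mf^*D\rrd$, which works precisely because $\div_X(h)$ is integral, so the floor can be absorbed via Remark \ref{rmk:rounding-property}. Everything else is bookkeeping of which prime divisors of $X$ lie over $U$ and which are $f$-exceptional, together with the standard description of reflexive rank-one sheaves on normal varieties and the fact that $f$ is an isomorphism in codimension one; I do not expect any genuine obstacle beyond this.
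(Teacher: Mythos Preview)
Your proof is correct and follows essentially the same approach as the paper's: both check the isomorphism on sections by comparing the divisorial conditions $\div_Y(h)+\lrd mD\rrd\ge 0$ and $\div_X(h)+\lrd mf^*D\rrd+E\ge 0$, using Remark~\ref{rmk:rounding-property} for the rounding step. The only cosmetic differences are that the paper first reduces to affine $Y$ and checks global sections (you work over an arbitrary open $U$), and that for the direction from $X$ to $Y$ the paper phrases the argument as ``push forward by $f$'' (using $f_*E=0$) whereas you evaluate coefficients along strict transforms of prime divisors of $Y$; these are the same computation.
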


\begin{proof}
	Since the question is local one the base, we may assume that $Y$ is an affine variety. Therefore it is enough to prove that $H^0(X, \mcO_X(\lrd mf^*D\rrd+E))\cong H^0(Y, \mcO_Y(\lrd mD\rrd))$ via $f^*$. To that end choose $f^*\vphi\in H^0(X, \mcO_X(\lrd mf^*D\rrd+E))$. Then $\lrd mf^*D\rrd+E+\div(f^*\vphi)\>0$. This implies that $mf^*D+E+\div(f^*\vphi)\>0$; pushing this forward by $f$ we get $mD+\div(\vphi)\>0$, hence $\lrd mD\rrd+\div(\vphi)\>0$ (see Remark \ref{rmk:rounding-property}), i.e. $\vphi\in H^0(Y, \mcO_Y(\lrd mD\rrd))$. For the other inclusion choose $\psi\in H^0(Y, \mcO_Y(\lrd mD\rrd))$. Then $\lrd mD\rrd+\div(\psi)\>0$, and thus $mD+\div(\psi)\>0$. Pulling it back by $f$ we get $mf^*D+\div(f^*\psi)\>0$, and hence $\lrd mf^*D\rrd+E+\div(f^*\psi)\>0$, since $E$ is effective. Therefore $f^*\psi\in H^0(X, \mcO_X(\lrd mf^*D\rrd+E))$ and we are done.\\
\end{proof}

\begin{corollary}\label{lem:transfer-of-pluricanonical-maps}
Let $(X, \Delta)$ be a log canonical pair of dimension $2$ and $K_X+\Delta$ is a $\mbQ$-Cartier big divisor. We run a $(K_X+\Delta)$-MMP and end with a minimal model $(X', \Delta')$. If the linear system  $|\lrd m(K_{X'}+\Delta')\rrd|$ gives a birational map onto its image for some $m\>1$, then $|\lrd m(K_{X}+\Delta)\rrd|$ also gives a birational map.
\end{corollary}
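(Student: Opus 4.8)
The plan is to realize $|\lrd m(K_X+\Delta)\rrd|$ as the pull-back of $|\lrd m(K_{X'}+\Delta')\rrd|$ along the morphism produced by the MMP. Since $K_X+\Delta$ is big it is pseudo-effective, so the given $(K_X+\Delta)$-MMP cannot contain a Mori fibre space; as there are no flips in dimension $2$, every step is a divisorial contraction, and the MMP is a chain
\[
X=X_0\xrightarrow{\,g_0\,}X_1\xrightarrow{\,g_1\,}\cdots\xrightarrow{\,g_{n-1}\,}X_n=X'
\]
whose composite $f:=g_{n-1}\circ\cdots\circ g_0\colon X\to X'$ is a projective birational morphism of normal surfaces with $\Delta'=f_*\Delta$.

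The first step is to show that $K_X+\Delta=f^*(K_{X'}+\Delta')+F$ for an effective $f$-exceptional $\mbQ$-divisor $F$. Writing $\Delta_i$ for the strict transform of $\Delta$ on $X_i$, the contraction $g_i$ contracts a single irreducible curve $C_i$ with $(K_{X_i}+\Delta_i)\cdot C_i<0$; if $K_{X_i}+\Delta_i=g_i^*(K_{X_{i+1}}+\Delta_{i+1})+a_iC_i$, then intersecting with $C_i$ and using $C_i^2<0$ forces $a_i>0$. Pulling these relations back and adding them up along $f$ yields $K_X+\Delta=f^*(K_{X'}+\Delta')+F$ with $F\geq 0$ and $\Supp F\subseteq\Ex(f)$; alternatively one may simply invoke the negativity lemma for $f$.

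Next I would compare sections. Since $mF\geq 0$ is supported on $\Ex(f)$, the $\mbQ$-divisors $m(K_X+\Delta)$ and $mf^*(K_{X'}+\Delta')$ have equal coefficient along every prime divisor not contracted by $f$, so
\[
E:=\lrd m(K_X+\Delta)\rrd-\lrd mf^*(K_{X'}+\Delta')\rrd
\]
is $f$-exceptional; moreover $E\geq 0$, because $\lrd A+B\rrd\geq\lrd A\rrd$ coefficient by coefficient whenever $B\geq 0$. Thus $E$ is an effective exceptional divisor and $\lrd m(K_X+\Delta)\rrd=\lrd mf^*(K_{X'}+\Delta')\rrd+E$, so Lemma \ref{lem:projection-formula} applies and gives $f_*\mcO_X(\lrd m(K_X+\Delta)\rrd)\cong\mcO_{X'}(\lrd m(K_{X'}+\Delta')\rrd)$, whence (taking global sections, or directly from the proof of that lemma) an isomorphism $H^0(X,\mcO_X(\lrd m(K_X+\Delta)\rrd))\cong H^0(X',\mcO_{X'}(\lrd m(K_{X'}+\Delta')\rrd))$ induced by pull-back $f^*$.

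Finally, under this identification a basis $\varphi_0,\dots,\varphi_N$ of $H^0(X',\mcO_{X'}(\lrd m(K_{X'}+\Delta')\rrd))$ corresponds to the basis $f^*\varphi_0,\dots,f^*\varphi_N$ of $H^0(X,\mcO_X(\lrd m(K_X+\Delta)\rrd))$, so the rational map defined by $|\lrd m(K_X+\Delta)\rrd|$ is $\phi\circ f$, where $\phi$ is the map defined by $|\lrd m(K_{X'}+\Delta')\rrd|$. As $f$ is birational and $\phi$ is birational onto its image by hypothesis, $\phi\circ f$ is birational onto its image, which is exactly the assertion. I expect the only point requiring real care to be the rounding bookkeeping of the third paragraph, namely verifying that the correction divisor $E$ is at once effective and $f$-exceptional so that Lemma \ref{lem:projection-formula} can be quoted verbatim; everything else is formal.
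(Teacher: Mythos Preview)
Your proof is correct and follows essentially the same approach as the paper: obtain $K_X+\Delta=f^*(K_{X'}+\Delta')+F$ with $F\geq 0$ exceptional via the negativity lemma, then invoke Lemma~\ref{lem:projection-formula} to identify the $H^0$'s. The paper's version is terser and leaves implicit both the rounding bookkeeping you spell out in your third paragraph and the passage from equal $H^0$'s to birationality of the associated map, but the argument is the same.
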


\begin{proof}
	Let $f:X\to X'$ be the birational morphism induced by the MMP. Then applying the negativity lemma at each step of this minimal model program it is easy to see that we have
	\[
		K_X+\Delta=f^*(K_{X'}+\Delta')+\sum a_iE_i,
	\]
	where $a_i\>0$ for all $i$.\\
	Therefore $H^0(X, \mcO_X(\lrd m(K_X+\Delta)\rrd))=H^0(X', \mcO_{X'}(\lrd m(K_{X'}+\Delta')\rrd))$ for all $m\>1$ by Lemma \ref{lem:projection-formula}, and the result follows.
	
\end{proof}

In the following we will recall an important result of Alexeev from \cite{Ale94}. To make the statement of his theorem more precise we define some notation and terminologies first. 
\begin{definition}\label{def:redundant-components}
	Let $(X, B\>0)$ be a log canonical pair and $K_X+B$ a $\mbQ$-Cartier big divisor. We call a divisor $\phi(B)$ a \emph{redundant part} of $B$ if it satisfies the following properties:
	\begin{enumerate}[label=(\roman*)]
		\item $0\<\phi(B)\<B$ and for any prime Weil divisor $E$ contained in $\Supp(\phi(B))$, $\coeff_{\phi(B)}(E)=\coeff_{B}(E)$,
		\item $K_X+(B-\phi(B))$ is big, and
		\item $\phi(B)$ is a maximal divisor satisfying these conditions.
	\end{enumerate}
	The components of $B-\phi(B)$ are called the \emph{non-redundant} components of $B$.
\end{definition}

\begin{remark}\label{rmk:non-uniqueness-of-redundant-components}
	Note that $\phi(B)$ not unique in general, as there could be a different set of components $\phi'(B)$ of $B$ removing which could give $K_X+B-\phi'(B)$ big as well.
\end{remark}

The following important result due to Alexeev shows that under certain conditions the number of non-redundant components of the boundary divisor $B$ is bounded from above.	
	\begin{theorem}\cite[Theorem 7.3, Corollary 7.4]{Ale94}\label{thm:bounding-components}
		Fix a positive real number $\ve>0$ and a DCC set $I\subset [0, 1]\cap\mbQ$. Let $\mfC$ be a collection of pairs $(X, \Delta)$ satisfying the following properties:
		\begin{enumerate}
			\item $X$ is a projective surface,
			\item $(X, \Delta)$ is $\ve$-log canonical and the coefficients of $\Delta$ are contained in $I$, and
			\item $K_X+\Delta$ is big. 
		\end{enumerate}
		Furthermore, for a $\mbQ$-divisor $D$ on $X$ let $\nt(D)$ denote the number of irreducible components of $D$.\\
		 Then there exists a positive integer $A=A(I, \ve)>0$ depending only on the set $I$ and $\ve$ such that
		 \[
			\nt(\Delta-\phi(\Delta))\<A\quad\mbox{for all } (X, \Delta)\in\mfC\mbox{ and for all choices of } \phi(\Delta).
		\]
	\end{theorem}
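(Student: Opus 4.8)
The plan is to reduce the statement to a combinatorial bound on a \emph{minimal} big sub-boundary, and then to extract that bound from the Zariski decomposition together with the $\ve$-log canonical hypothesis. First, given $(X,\Delta)\in\mfC$ and a choice of redundant part $\phi(\Delta)$, I would replace $\Delta$ by the non-redundant part $\Delta_0:=\Delta-\phi(\Delta)=\sum_{j\in T}a_jB_j$. Since $\Delta_0\<\Delta$, the pair $(X,\Delta_0)$ is again $\ve$-log canonical (discrepancies only increase when the boundary shrinks), $K_X+\Delta_0$ is still big, and the maximality of $\phi(\Delta)$ says exactly that $T$ is minimal with respect to inclusion among the subsets $T'$ of the set of components for which $K_X+\sum_{j\in T'}a_jB_j$ is big; as this family of subsets is upward closed, minimality of $T$ is equivalent to the statement that $K_X+\Delta_0-a_kB_k$ is \emph{not} big for every $k\in T$. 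Because $I$ is DCC, $\delta:=\min\bigl(I\cap(0,1]\bigr)>0$, and $\ve$-log canonicity forces every boundary coefficient to be $\<1-\ve$; thus every coefficient of $\Delta_0$ lies in $[\delta,1-\ve]$, and it suffices to bound $\nt(\Delta_0)=|T|$ in terms of $I$ and $\ve$.

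Next I would take the Zariski decomposition $K_X+\Delta_0=P+N$ on $X$ (which is $\mbQ$-factorial, being klt), with $P$ nef, $N\>0$ with negative-definite support, and $P\cdot N=0$; since $\vol(K_X+\Delta_0)=P^2>0$, the class $P$ is nef and big. For each $k\in T$ the divisor $(K_X+\Delta_0-a_kB_k)-(P-a_kB_k)=N$ is effective, so monotonicity of the volume gives $\vol(P-a_kB_k)\<\vol(K_X+\Delta_0-a_kB_k)=0$, i.e.\ $P-a_kB_k$ is not big for every $k\in T$. Since $P$ is big and $B_k$ effective, $P-tB_k$ is big for all small $t>0$, so the bigness threshold $\tau_k:=\sup\{t\>0:\ P-tB_k\text{ big}\}$ satisfies $0<\tau_k\<a_k\<1-\ve$, and $P-\tau_kB_k$ lies on the boundary of the big cone. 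In other words, $P$ must carry $|T|$ distinct irreducible curves, each becoming non-big after one subtracts a definite (between $\delta$ and $1-\ve$) multiple of it.

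The remaining --- and, I expect, genuinely hard --- step is to bound, in terms of $\ve$ and $\delta$ alone, how many such curves a nef and big class on a surface can support, using that $(X,\Delta_0)$ is $\ve$-log canonical. The naive estimate only produces a bound involving the Picard number $\rho(X)$ and $P^2$ (the curves $B_i$ with $P\cdot B_i=0$ are linearly independent in $\NS(X)$ by the Hodge index theorem, and those with $P\cdot B_i>0$ can be paired against $P$), and neither $\rho(X)$ nor $P^2$ is controlled by $I$ and $\ve$. The $\ve$-lc hypothesis is what replaces these: it bounds the dual graphs of the surface singularities that can occur (hence the complexity of $N$, whose support lies in $P^\perp$), and through adjunction $(K_X+\Delta_0)|_{B_i}=K_{B_i}+\Diff_{B_i}$, together with the DCC for the coefficients of the different, it controls how the $B_i$ meet each other and the singular locus. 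Carrying this out is precisely Alexeev's analysis in \cite[\S3, \S7]{Ale94}: via his method of successive approximations, his bound on $K^2$, and his a priori estimates on the self-intersection numbers arising in $\ve$-lc configurations, the problem reduces to a finite combinatorial one whose size is bounded by $\ve$, which yields the uniform constant $A(I,\ve)$. An essentially equivalent route would first bound the number of exceptional divisors $E$ over $X$ with $a(E,X,\Delta_0)<0$ --- again by bounding the relevant dual graphs and the number of non-canonical points --- and then transfer that bound to the non-redundant boundary components.
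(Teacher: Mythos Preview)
Your proposal and the paper's proof ultimately coincide: both defer the substance to Alexeev \cite[Theorem 7.3, Corollary 7.4]{Ale94}. The paper's proof is in fact a single sentence --- it cites those two results and observes that a DCC subset of $(0,1]$ has a positive minimum --- so your reductions (passing to the non-redundant sub-boundary, taking the Zariski decomposition, analyzing bigness thresholds) are additional scaffolding rather than a different route. They are not wrong, and they do capture the flavor of what Alexeev actually proves, but since you explicitly hand the ``genuinely hard step'' back to \cite[\S3, \S7]{Ale94}, your argument is not more self-contained than the paper's; it is the same citation with an outline of why the cited theorem is plausible.

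One small correction: your aside that ``an essentially equivalent route would first bound the number of exceptional divisors $E$ over $X$ with $a(E,X,\Delta_0)<0$'' reverses the logical order used in this paper. Here the bound on non-redundant components (the present theorem) is the input, and the bound on exceptional divisors of negative discrepancy is deduced from it afterwards via Proposition~\ref{pro:bounding-exceptional-divisors}; they are not interchangeable starting points.
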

\begin{proof}
	It follows from Theorem 7.3 and Corollary 7.4 of \cite{Ale94} and noting the fact that the DCC set $I\setminus\{0\}$ has a minimum.\\
\end{proof}

The next result from a recent paper of Hacon and Kov\'acs \cite{HK19} will play a crucial role in our proof of the main theorem. We note that the proof of this lemma follows as an easy consequence of effective Matsusaka theorem due to \cite{Ter99} and \cite{DE15} as explained in \cite{HK19}.
\begin{lemma}\cite[Corollary 1.14]{HK19}\label{lem:birational-bound}
	Let $X$ be a normal surface and $D$ a nef and big Cartier divisor. If $D^2\>\vol(K_X)$, then the linear system $|K_X+qD|$ defines a birational morphism onto its image for all $q\>18$.
\end{lemma}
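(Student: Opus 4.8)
The plan is to reduce the statement to the separation of a general pair of points and then, at the one place where a characteristic-zero proof would invoke a vanishing theorem, to substitute the characteristic-free effective Matsusaka theorem for surfaces of \cite{Ter99, DE15}, exactly as indicated in \cite{HK19}.

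First I would pass to a resolution $\mu\colon X'\to X$ to reduce to the case where $X$ is smooth: $\mu^*D$ remains nef, big and Cartier, one has $(\mu^*D)^2=D^2\>\vol(K_X)\>\vol(K_{X'})$, and the relevant spaces of global sections are comparable (in the spirit of Lemma \ref{lem:projection-formula}, using that every Weil divisor on a normal surface is $\mbQ$-Cartier), so this reduction is harmless. For a smooth projective surface $X$ it then suffices to prove, for each $q\>18$, that the restriction map
\[
H^0\bigl(X,\mcO_X(K_X+qD)\bigr)\longrightarrow H^0\bigl(Z,\mcO_X(K_X+qD)|_Z\bigr)
\]
is surjective for a general length-two subscheme $Z\subset X$: indeed this makes $\phi_{|K_X+qD|}$ injective on a dense open set, hence birational onto its image, and in particular $h^0(X,\mcO_X(K_X+qD))\>2$ --- the latter also being visible from Riemann--Roch, since $h^2(X,\mcO_X(K_X+qD))=h^0(X,\mcO_X(-qD))=0$ and $\chi(\mcO_X(K_X+qD))=\chi(\mcO_X)+\tfrac q2\bigl(qD^2+K_X\cdot D\bigr)$.

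To analyse the restriction map I would blow up the (general, hence smooth) support of $Z$, say $\nu\colon\widetilde X\to X$ with exceptional curves $E_1,E_2$. Surjectivity is then equivalent to the injectivity of
\[
H^1\bigl(\widetilde X,\mcO_{\widetilde X}(K_{\widetilde X}+M)\bigr)\longrightarrow H^1\bigl(\widetilde X,\mcO_{\widetilde X}(K_{\widetilde X}+M+E_1+E_2)\bigr),
\]
where $M=q\,\nu^*D-2E_1-2E_2$, and is implied by $H^1\bigl(\widetilde X,\mcO_{\widetilde X}(K_{\widetilde X}+M)\bigr)=0$. In characteristic zero one checks that $M$ is nef and big for general $Z$ and all $q\>3$, and concludes by Kawamata--Viehweg vanishing; in characteristic $p$ this is precisely the step that breaks down, since Kawamata--Viehweg and Nadel vanishing fail. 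Here I would instead feed the effective Matsusaka theorem of \cite{Ter99, DE15} into the argument, following \cite{HK19}: it yields, by a characteristic-free argument, an explicit threshold $q_0$ --- a priori depending only on the numerical invariants $D^2$ and $D\cdot K_X$ --- past which $|K_X+qD|$ separates general points. The hypothesis $D^2\>\vol(K_X)$ then serves to collapse this a priori bound to the absolute constant $18$: running a $K_X$-MMP one has $\vol(K_X)=K_{X_{\min}}^2$, and $D^2\>\vol(K_X)$ bounds the invariants occurring in $q_0$ uniformly in $X$ and $D$, so that $q_0\<18$.

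The main obstacle is precisely this characteristic-$p$ core: one cannot annihilate the obstructing $H^1$ by a vanishing theorem, so all of the content lies in rerouting through the effective Matsusaka theorem and in checking that, once $D$ has been normalised by $D^2\>\vol(K_X)$, the resulting explicit numerology really does come out at most $18$. The remaining ingredients --- passage to a resolution, the reduction to separating a general length-two subscheme, and the Riemann--Roch bookkeeping --- are routine.
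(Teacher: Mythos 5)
The paper does not prove Lemma \ref{lem:birational-bound}: it cites \cite[Corollary 1.14]{HK19} verbatim and only remarks that it follows from the effective Matsusaka theorem of \cite{Ter99, DE15}. So there is no internal argument to compare your sketch against; the relevant standard is whether you have actually produced a proof.

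You have not. Your outline correctly identifies the structure of the argument in \cite{HK19} --- reduce to a smooth surface, reduce to separating a general length-two scheme, and replace the characteristic-zero vanishing step by the effective Matsusaka theorem --- but you explicitly leave the entire quantitative core open: ``checking that, once $D$ has been normalised by $D^2\>\vol(K_X)$, the resulting explicit numerology really does come out at most $18$.'' That check is precisely the content of \cite[Corollary 1.14]{HK19}, so what you have written is a roadmap, not a proof. You would need to state the exact form of the bound from \cite{Ter99, DE15} (which depends on $D^2$ and $D\cdot K_X$), feed in $D^2\>\vol(K_X)$, and verify the constant; without that the argument is circular, because you are in effect citing the very conclusion you set out to prove.

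Two smaller but genuine errors. First, the parenthetical claim that ``every Weil divisor on a normal surface is $\mbQ$-Cartier'' is false (it holds for rational, in particular $\mbQ$-factorial, surface singularities, but not for arbitrary normal surfaces); fortunately the reduction to a resolution does not need it, since one can compare $H^0$ of reflexive sheaves directly and use $\vol(K_{X'})\<\vol(K_X)$. Second, the assertion that in characteristic zero $M=q\,\nu^*D-2E_1-2E_2$ is nef and big for general $Z$ and all $q\>3$ is wrong as stated: already for $X=\mbP^2$, $D=H$, the strict transform of the line through the two points has $M$-degree $q-4$, so nefness fails at $q=3$. Neither of these is fatal to the strategy, but together with the unverified numerology they leave the proposal well short of a proof.
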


In the next two results we will bound the number of exceptional divisors of negative discrepancies over a (log) minimal model $(X, \Delta)$ by the number of components of the boundary divisor $\Delta$ and also the Cartier index of $(K_X+\Delta)$, when $(X, \Delta)$ has $\ve$-klt singularities.
\begin{proposition}\label{pro:bounding-exceptional-divisors}
	Fix a positive integer $N>0$. Let $\mfC$ be the collection of pairs $(X, \Delta)$ satisfying the following properties:
	\begin{enumerate}
		\item $X$ is a projective surface,
		\item $(X, \Delta)$ has terminal singularities, and
		\item $\nt(\Delta)\<N$, i.e., the number of components of $\Delta$ is bounded by $N$.
	\end{enumerate}
	We run a $(K_X+\Delta)$-MMP and assume that $(X', \Delta')$ is the corresponding minimal model. Let $\mfC'$ be the collection of all such minimal models $(X', \Delta')$ for all $(X, \Delta)\in\mfC$. Then 
	\[
		\#\left\{E\; :\; E\mbox{ is exceptional over } X' \mbox{ with } a(E, X', \Delta')<0  \right\}\<N
	\] 
	for all $(X', \Delta')\in\mfC'$.
\end{proposition}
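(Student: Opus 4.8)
The plan is to show that every prime divisor $E$ over $X'$ with $a(E,X',\Delta')<0$ must in fact be (the strict transform of) a component of $\Delta$; since $\nt(\Delta)\leq N$ this yields the bound at once. First I note that, because we are on surfaces, the $(K_X+\Delta)$-MMP contains no flips, so the induced birational map $f\colon X\bir X'$ is a genuine morphism $f\colon X\to X'$. Applying the negativity lemma at each step exactly as in the proof of Corollary~\ref{lem:transfer-of-pluricanonical-maps} gives
\[
	K_X+\Delta = f^*(K_{X'}+\Delta') + \sum_i a_iE_i,\qquad a_i\geq 0,
\]
where the $E_i$ are the $f$-exceptional prime divisors.

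Next I would use this to record the standard monotonicity of discrepancies along the MMP: pulling the displayed identity back to a common log resolution and comparing with the defining identities of the discrepancies over $X$ and over $X'$ shows $a(E,X,\Delta)\leq a(E,X',\Delta')$ for every prime divisor $E$ over $X$ — equivalently over $X'$, since $f$ is a morphism. (Alternatively one simply quotes that discrepancies do not decrease along a $(K_X+\Delta)$-MMP; see e.g.\ \cite{Kol13}.) Now suppose $E$ is exceptional over $X'$ with $a(E,X',\Delta')<0$. By monotonicity $a(E,X,\Delta)<0$. Since $(X,\Delta)$ is terminal, every divisor exceptional over $X$ has \emph{positive} discrepancy with respect to $(X,\Delta)$, so $E$ cannot be exceptional over $X$; hence $E$ is the strict transform of a prime divisor on $X$. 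For such a divisor $a(E,X,\Delta)=-\coeff_\Delta(E)$, and $a(E,X,\Delta)<0$ then forces $\coeff_\Delta(E)>0$, i.e.\ $E$ is a component of $\Delta$. Thus $E\mapsto E$ is an injection from $\{E : E\text{ exceptional over }X',\ a(E,X',\Delta')<0\}$ into the set of components of $\Delta$, whose cardinality is $\nt(\Delta)\leq N$, which is exactly the assertion.

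The argument is short, and the only delicate point is the second step: a component of $\Delta$ may itself be contracted by $f$, so the comparison of discrepancies must be carried out on a common resolution (or cited as a general MMP fact) rather than by naively reading off coefficients on $X'$. Once monotonicity of discrepancies is in hand, terminality of $(X,\Delta)$ does all the remaining work; note that no DCC or boundedness input about $I$ enters here, only the hypothesis $\nt(\Delta)\leq N$.
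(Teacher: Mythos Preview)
Your proof is correct and follows essentially the same approach as the paper's: both show that any divisor $E$ exceptional over $X'$ with $a(E,X',\Delta')<0$ must be (the strict transform of) a component of $\Delta$, and then count. The only cosmetic difference is that the paper phrases the comparison of discrepancies via the crepant pair $(X,\Delta-\sum a_iE_i)$ rather than invoking monotonicity of discrepancies along the MMP; these are the same computation.
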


\begin{proof}
	Let $f:X\to X'$ be the birational morphism induced by the MMP. Then we have 
	\begin{equation}\label{eqn:log-equation-for-minimal-model}
		K_X+\Delta=f^*(K_{X'}+\Delta')+\sum a_iE_i,
	\end{equation}
	where $a_i\>0$ for all $i$.\\
	
 Let $F$ be an exceptional divisor over $X'$ with discrepancy $a(F, X', \Delta')=b<0$. Set $E:=\sum a_iE_i$, then $(X, \Delta-E)$ is terminal, since $(X, \Delta)$ is terminal. Therefore from $a(F, X, \Delta-E)=a(F, X', \Delta')=b<0$ it follows that $\mbox{center}_X (F)$ must be a component of $\Delta-E$. Now since the components of $E$ have non-negative discrepancies with respect to the pair $(X', \Delta')$ and $a(F, X', \Delta')<0$, it follows that $\mbox{center}_X(F)$ must be a component of $\Delta$. Finally, since the number of components of $\Delta$ is bounded above by $N$ for all $(X, \Delta)\in\mfC$, the required bound holds.

\end{proof}

\begin{theorem}\cite[Lemma 2.6]{HK19}\label{thm:bounding-Cartier-index}
	Fix a positive integer $k>0$ and a positive real number $\ve>0$. Let $\mfD$ be the set of all of pair $(X, \Delta)$ satisfying the following properties:
	\begin{enumerate}
		\item $X$ is a projective surface,
		\item $(X, \Delta\>0)$ has $\ve$-klt singularities, and
		\item the number of exceptional divisor $E$ over $X$ with $a(E, X, \Delta)<0$ is at most $k$.
	\end{enumerate}
	Then there exists a positive integer $N=N(k, \ve)$ depending only on $k$ and $\ve$ such that $NK_X$ is Cartier and $ND$ is also Cartier for any integral Weil divisor $D$ contained in the support of $\Delta$.\\
\end{theorem}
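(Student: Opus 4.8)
The plan is to reduce the statement to a purely local one at the finitely many singular points of $X$ and then to analyse the minimal resolution there. Since $NK_X$ (resp. $ND$) is automatically Cartier away from $\Sing X$, and the global Cartier index is the least common multiple of the local ones, it is enough to produce $N_0=N_0(k,\ve)$ such that, for every $\ve$-klt surface germ $x\in X$ carrying at most $k$ exceptional divisors of negative discrepancy, $N_0$ is divisible by the local Cartier index of $K_X$ at $x$ and by that of every prime Weil divisor through $x$. First I would replace $(X,\Delta)$ by $(X,0)$: since $\Delta\>0$ we have $a(E,X,0)\>a(E,X,\Delta)$ for every divisor $E$ over $X$, so $(X,0)$ is again $\ve$-klt and still has at most $k$ exceptional divisors of negative discrepancy.

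Next, take the minimal resolution $\pi\colon Y\to X$ over a neighbourhood of $x$, with exceptional curves $E_1,\dots,E_r$, negative-definite intersection matrix $M=(E_i\cdot E_j)$, and write $K_Y=\pi^*K_X+\sum a_iE_i$ and $\pi^*D=\widetilde D+\sum c_iE_i$ for the numerical pullbacks. Using that log terminal surface singularities are rational in every characteristic, each $E_i\cong\mbP^1$, the dual graph is a tree, and $E_i\cdot E_j\in\{0,1\}$ for $i\neq j$; minimality forces $E_i^2\<-2$, so the negativity lemma applied to the $\pi$-nef exceptional divisor $\sum_j a_jE_j$ (note $(\sum_j a_jE_j)\cdot E_i=K_Y\cdot E_i=-2-E_i^2\>0$) gives $a_i\<0$, while $\ve$-kltness of $(X,0)$ gives $a_i>-1+\ve$. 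The local observation is that $NK_X$ is Cartier at $x$ exactly when $Na_i\in\mbZ$ for all $i$, and $ND$ is Cartier at $x$ exactly when $Nc_i\in\mbZ$ for all $i$ (in each case $\pi^*$ of the divisor must be an integral, hence Cartier, divisor on the smooth surface $Y$). Since $\sum_j a_j(E_i\cdot E_j)=K_Y\cdot E_i$ and $\sum_j c_j(E_i\cdot E_j)=-\widetilde D\cdot E_i$ have integral right-hand sides and $M$ is invertible, both $(a_i)$ and $(c_i)$ lie in $\tfrac{1}{|\det M|}\mbZ^{r}$; so it suffices to bound $|\det M|$ in terms of $k$ and $\ve$.

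To do this I would use the following dichotomy for a log terminal surface germ with (automatically connected) exceptional dual graph: either all $a_i=0$, in which case $x$ is Du Val and $K_X$ is already Cartier, or $a_i<0$ for every $i$. Indeed, if $a_i=0$ for some $i$, then from $0\<-2-E_i^2=\sum_{j\ \mathrm{nbr}}a_j(E_i\cdot E_j)\<0$ we get $E_i^2=-2$ and $a_j=0$ for every neighbour $j$, and connectedness propagates this to all vertices. In the non–Du Val case every $E_i$ then satisfies $a(E_i,X,0)=a_i<0$, so the hypothesis forces $r\<k$; and rewriting adjunction as $-E_i^2(1+a_i)=2+\sum_{j\ \mathrm{nbr}}a_j(E_i\cdot E_j)$, together with $1+a_i>\ve$ and $\sum_{j\ \mathrm{nbr}}a_j(E_i\cdot E_j)\<0$, gives $-E_i^2<2/\ve$. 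Hence $M$ ranges over a finite set of negative-definite symmetric integer matrices of size $\<k$ with diagonal entries in $\mbZ\cap[-2/\ve,-2]$ and off-diagonal entries in $\{0,1\}$; taking $N=N(k,\ve)$ to be the least common multiple of the finitely many nonzero integers $|\det M|$ arising this way finishes the proof.

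The step I expect to be the real obstacle is the careful handling of the minimal resolution in arbitrary characteristic: that log terminal surface singularities are rational (so the dual graph is a tree of smooth rational curves and the genus term in adjunction vanishes), and that the local Cartier index of a Weil divisor equals the common denominator of the coefficients of its numerical pullback to the minimal resolution. Granting these, the remaining estimate on $|\det M|$ is elementary linear algebra and is insensitive to the characteristic.
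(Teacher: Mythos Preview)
The paper does not give its own proof of this statement; it is quoted verbatim from \cite[Lemma~2.6]{HK19}. Your argument is essentially the standard one via the minimal resolution, and the linear-algebra estimate on $\lvert\det M\rvert$ is correct. There is, however, one genuine gap.

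The problem is the combination of two moves: replacing $(X,\Delta)$ by $(X,0)$, and simultaneously strengthening the local goal to ``bound the Cartier index of \emph{every} prime Weil divisor through $x$''. At a Du~Val point of type $A_n$ the pair $(X,0)$ is canonical, hence $\ve$-klt for every $\ve$, and has \emph{no} exceptional divisor of negative discrepancy; so your hypothesis on $k$ says nothing there. Yet the local class group has order $n+1$, so the Cartier index of an arbitrary Weil divisor through $x$ is unbounded. Your dichotomy correctly records that $K_X$ is already Cartier in the Du~Val case, but it never returns to bound the index of $D$, and with the information you have retained (only $(X,0)$) it cannot.

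The repair is to keep $\Delta$ in play at precisely this step. You only need to control $\lvert\det M\rvert$ at singular points lying on $\Supp\Delta$. If $D\subset\Supp\Delta$ passes through $x$ with coefficient $d>0$, then writing $\pi^*D=\widetilde D+\sum_i c_iE_i$ and using that $-M^{-1}$ has all positive entries (connected dual graph, $M$ negative definite with non-negative off-diagonals) together with $\widetilde D\cdot E_j>0$ for some $j$, one gets $c_i>0$ for every $i$. Hence $a(E_i,X,\Delta)\<a_i-d\,c_i<0$ for all $i$, and the \emph{original} hypothesis forces $r\<k$ even at Du~Val points. With this correction your self-intersection bound $-E_i^2<2/\ve$ and the finiteness of the possible matrices $M$ go through unchanged, and the proof is complete.
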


The following two technical results will be useful in the proof of the main theorem.
\begin{lemma}\label{lem:modifying-divisor}
Let $I\subset [0, 1]$ be a DCC set and $\delta$ is a real number satisfying $0<\delta<1$. Let $a>0$ be the minimum of the set $I\setminus\{0\}$. Set $k=\lru\frac{1}{a\delta}\rru$ and define $a_i':=\frac{\lrd ka_i\rrd}{k}$ for $a_i\in I\setminus\{0\}$. Then 
\[
(1-\delta)a_i<a_i'\<a_i.	
\] 
\end{lemma}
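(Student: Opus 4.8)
The plan is to prove the two inequalities separately, the right-hand one being essentially immediate and the left-hand one requiring the definition of $k$.

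First I would establish $a_i' \leq a_i$. By definition $a_i' = \frac{\lrd k a_i\rrd}{k}$, and since $\lrd k a_i\rrd \leq k a_i$ by the defining property of the floor function (see Definition \ref{def:rounding}), dividing by the positive integer $k$ gives $a_i' \leq a_i$ at once.

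Next I would establish the strict lower bound $(1-\delta)a_i < a_i'$. The key inequality is $k a_i - \lrd k a_i\rrd < 1$, again from Definition \ref{def:rounding}. Dividing by $k$ gives $a_i - a_i' < \frac{1}{k}$, so $a_i' > a_i - \frac{1}{k}$. Now I want $a_i - \frac{1}{k} \geq (1-\delta)a_i$, i.e. $\delta a_i \geq \frac{1}{k}$, i.e. $k \geq \frac{1}{\delta a_i}$. Since $a_i \geq a$ (as $a$ is the minimum of $I\setminus\{0\}$ and $a_i \in I\setminus\{0\}$), we have $\frac{1}{\delta a_i} \leq \frac{1}{\delta a}$, and by construction $k = \lru\frac{1}{a\delta}\rru \geq \frac{1}{a\delta} \geq \frac{1}{\delta a_i}$. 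Therefore $\delta a_i \geq \frac{1}{k}$, hence $a_i - \frac{1}{k} \geq (1-\delta)a_i$, and combining with the strict inequality $a_i' > a_i - \frac{1}{k}$ yields $(1-\delta)a_i < a_i'$.

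There is no serious obstacle here; the only point to be careful about is the direction of the inequalities when dividing by $k$ (harmless, as $k>0$) and the fact that $a>0$ really is attained — this uses that a DCC set $I\setminus\{0\}$ has a minimum, which is exactly the hypothesis packaged into the statement. One should also note $k \geq 1$ so that the expressions make sense, which is automatic since $\frac{1}{a\delta} > 0$ forces $\lru\frac{1}{a\delta}\rru \geq 1$.
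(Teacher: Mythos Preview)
Your proof is correct and follows essentially the same route as the paper: the upper bound is immediate from $\lrd ka_i\rrd\le ka_i$, and for the lower bound the paper reduces to showing $ka_i-\lrd ka_i\rrd<ka_i\delta$ and then uses $k\ge\frac{1}{a\delta}\ge\frac{1}{a_i\delta}$ to get $ka_i\delta\ge 1>ka_i-\lrd ka_i\rrd$, which is exactly your argument after dividing through by $k$.
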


\begin{proof}
It is clear from the defintion of $a_i'$ that $a_i'\<a_i$, so we only need to prove the other inequality. For that it is enough to show that $(ka_i-\lrd ka_i\rrd)<ka_i\delta$. To that end observe that $k=\lru\frac{1}{a\delta}\rru\>\frac{1}{a_i\delta}$, since $a_i\>a$. Thus $ka_i\delta\>1>(ka_i-\lrd ka_i\rrd)$.
\end{proof}

\begin{lemma}\label{lem:terminalization}
Fix a positive integer $k$.	Let $(X, \Delta)$ be a log smooth klt pair of dimension $2$ with coefficients of $\Delta$ in the finite set $J=\{\frac{\ell}{k}\; :\; \ell=1, 2,\ldots, k-1\}$. Then there exists a crepant log resolution $f:X'\to X$ of the pair $(X, \Delta)$ such that $K_{X'}+\Delta'=f^*(K_X+\Delta)$, $(X', \Delta')$ has terminal singularities and the coefficients of $\Delta'$ are contained in the set $J$.
\end{lemma}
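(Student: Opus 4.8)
The plan is to build $X'$ by an explicit sequence of blow-ups of points that successively removes every exceptional divisor of non-positive discrepancy relative to the pair, and to show that this sequence terminates because the multiplicities that control it strictly decrease inside the discrete set $\frac1k\mbZ$.

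First I would fix notation and record the local picture. Write $\Delta=\sum a_iD_i$ with each $a_i\in J$; since $(X,\Delta)$ is log smooth, $X$ is a smooth surface and $\sum D_i$ is snc, so at every closed point $p$ at most two components pass through $p$ and $\mult_p\Delta=\sum_{p\in D_i}a_i$ is either $a_i+a_j$ (at a crossing $p=D_i\cap D_j$), or $a_i<1$ (at a point on a single component), or $0$. Blowing up $p$ produces an exceptional curve $E$ with $a(E,X,\Delta)=1-\mult_p\Delta$, and, letting $\Gamma_1$ be the strict transform of $\Delta$ together with $(\mult_p\Delta-1)E$, one gets the crepant identity $K_{X_1}+\Gamma_1=f_1^*(K_X+\Delta)$. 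A bookkeeping of multiplicities through such blow-ups — working, where necessary, with sub-boundaries of possibly negative coefficient — yields the characterization that a log smooth surface pair is terminal if and only if $\mult_p<1$ at every point: if some crossing has $a_i+a_j\>1$ the single blow-up above exhibits a divisor of discrepancy $\<0$, while conversely the condition ``all multiplicities $<1$'' is preserved by every crepant point blow-up (the new crossings on $E$ have multiplicity $(\mult_p\Delta-1)+a_i\<2\mult_p\Delta-1<1$), so every exceptional divisor over $X$ is created with positive discrepancy.

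Next I would run the obvious algorithm on $(X,\Delta)$: while the current pair $(X_n,\Delta_n)$ is not terminal, choose — using the characterization together with the fact that all coefficients ever occurring are $<1$ — a crossing $p=D_i\cap D_j$ of components of $\Delta_n$ with current coefficients $a_i+a_j\>1$, blow it up, and replace $\Delta_n$ by the strict transform plus $cE$ where $c:=a_i+a_j-1\>0$. Each step is crepant, so $(X_n,\Delta_n)$ stays klt (crepant modifications preserve discrepancies) and the composite stays crepant; and the coefficients stay in $J\cup\{0\}$, since writing $a_i=\ell_i/k$, $a_j=\ell_j/k$ with $\ell_i,\ell_j\in\{1,\dots,k-1\}$ the inequality $a_i+a_j\>1$ forces $c=(\ell_i+\ell_j-k)/k$ with $0\<\ell_i+\ell_j-k\<k-2$, so either $c=0$ (and $E$ is not a component of the new boundary) or $c\in J$. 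The decisive point is that the only new boundary crossings created by this blow-up lie on $E$, namely $D_i'\cap E$ and $D_j'\cap E$, with multiplicities $a_i+c=(a_i+a_j)-(1-a_i)<a_i+a_j$ and $a_j+c<a_i+a_j$: they are strictly smaller than the multiplicity we blew up, and the rest of the configuration is untouched. Since all boundary-crossing multiplicities lie in $\frac1k\mbZ$, are $<2$, and are blown up only while $\>1$, along any chain of successively created crossings the multiplicity strictly decreases through at most $k$ values; as each blow-up spawns at most two new crossings and there are only finitely many crossings to begin with, the algorithm performs only finitely many blow-ups.

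When the algorithm halts we obtain $f:X'\to X$, a composition of blow-ups of points on smooth surfaces with snc total transform, hence a log resolution of $(X,\Delta)$ with $X'$ smooth and $\operatorname{Exc}(f)\cup f_*^{-1}\Delta$ snc; it is crepant by construction, $(X',\Delta')$ is terminal because the algorithm stopped only once every multiplicity was $<1$, and the coefficients of $\Delta'$ lie in $J$, as required. I expect the main obstacle to be termination: one must check carefully that blowing up a crossing of multiplicity $\mu\>1$ replaces it by crossings of strictly smaller multiplicity and creates no other crossings anywhere — this is exactly where it is used that every coefficient is $<1$. The discrepancy computation, the preservation of log smoothness and of the coefficient set, and the terminal-versus-multiplicity characterization are then routine.
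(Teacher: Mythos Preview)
Your proof is correct and follows essentially the same construction as the paper: iteratively blow up the crossings $D_i\cap D_j$ with $a_i+a_j\ge 1$, observe that the new exceptional coefficient $a_i+a_j-1$ again lies in $J\cup\{0\}$, and stop once every crossing has multiplicity $<1$, which you correctly identify as the terminal condition for a log smooth surface pair.

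The one genuine difference is in the termination argument. You argue combinatorially: the two new crossings on $E$ have multiplicities $a_i+c$ and $a_j+c$, both strictly smaller than $a_i+a_j$ because $a_i,a_j<1$, and since all these multiplicities live in $\frac1k\mbZ\cap[1,2)$ any chain of successively created bad crossings has length at most $k-1$, so the whole tree of blow-ups is finite. The paper instead observes that every blow-up in the algorithm extracts an exceptional divisor over $X$ with discrepancy $\le 0$, and since $(X,\Delta)$ is klt there are only finitely many such divisors (citing \cite[Proposition~2.36(2)]{KM98}); hence the process stops. Your argument is more elementary and self-contained (no appeal to finiteness of non-positive discrepancies), and it even gives a crude explicit bound on the number of blow-ups; the paper's argument is cleaner to state and makes the link to the klt hypothesis transparent. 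Both reach the same endpoint.
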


\begin{proof}
	Since $(X, \Delta)$ is a klt pair, by \cite[Proposition 2.36(2)]{KM98} there are finitely many exceptional divisors over $X$ with non-positive discrepancies. We will extract these divisors. Note that since $(X, \Delta)$ is a log smooth klt pair of dimension $2$, if $E$ is an exceptional divisor over $X$ with $a(E, X, \Delta)\<0$, then the $\mbox{center}_X (E)$ is a point on $X$ contained in the intersection of precisely two components of $\Delta$.

Now write $\Delta=\sum_{i=1}^N a_iD_i$. We claim that if $F$ is an exceptional divisor over $X$ such that $a(F, X, \Delta)\<0$ and $\mbox{center}_X(F)\in D_i\cap D_j$, then $a_i+a_j-1\>0$. To the contrary assume that $a_i+a_j-1<0$. Let $f_1:X_1\to X$ be the blow up at $\mbox{center}_X(F)$, $F_1$ is the exceptional divisor and $K_{X_1}+\Delta_1=f_1^*(K_X+
\Delta)$. Then $a(F_1, X, \Delta)=(1-a_i-a_j)>0$. If $\cent_{X_1}(F)=F_1$, then $a(F, X, \Delta)=a(F_1, X_1, \Delta_1)>0$ and we have a contradiction. If not, then $\cent_{X_1}(F)$ is a point contained in the support of $F_1$.  
Let $f_2:X_2\to X_1$ be the blow up at $\cent_{X_1}(F)$, $F_2$ is the exceptional divisor and $K_{X_2}+\Delta_2=f_2^*(K_{X_1}+\Delta_1)$. Then by Lemma \ref{lem:positive-dicrepancy} we have $a(F_2, X, \Delta)=a(F_2, X_2, \Delta_2)>0$. Thus if $\cent_{X_2}(F)=F_2$, then $a(F, X, \Delta)=a(F_2, X,\Delta)>0$ and we again have a contradiction, otherwise $\cent_{X_2}(F)$ is a point and we blow up $X_2$ at this point.
 Continuing this process, by \cite[Lemma 2.45]{KM98} after finitely many steps we arrive at a morphism $f_n:X_n\to X_{n-1}$ for $n\>1$, such that $\cent_{X_n}(F)=F_n=\Ex(f_n)$ and $a(F, X, \Delta)=a(F_n, X, \Delta)>0$. This is a contradiction.\\
Thus in order to extract the exceptional divisors over $X$ with non-positive discrepancies we only need to blow up the points in $D_i\cap D_j\neq\emptyset$ whenever $a_i+a_j-1\>0$. Let $g_1:Y_1\to X$ be the blow up of all the points of $D_i\cap D_j$ for all $i, j\in\{1,\ldots, N\}, i\neq j$, whenever $a_i+a_j-1\>0$. Write $K_{Y_1}+\Delta_1=g_1^*(K_X+\Delta)$ and let $E_1$ be a $g_1$-exceptional divisor whose coefficient in $\Delta_1$ is not zero. Then the coefficient of $E_1$ in $\Delta_1$ is of the form $a_i-a_j-1>0$, and it is easy to see that $a_i-a_j-1\in J$, since $a_i, a_j\in J$. Now observe that $(Y_1, \Delta_1)$ is a log smooth pair with coefficients of $\Delta_1$ contained in $J$. Suppose $\Delta_1=\sum_{i=1}^{N_1}a_{i1}D_{i 1}$, where $a_{i1}\in J$ for all $i\in\{1, \ldots, N_1\}$. Now let $g_2:Y_2\to Y_1$ be the blow up of all the points in $D_{i1}\cap D_{j 1}$ for all $i, j\in\{1,\ldots, N_1\}, i\neq j$, whenever $a_{i1}+a_{j1}-1\>0$. Write $K_{Y_2}+\Delta_2=g_2^*(K_{X_1}+\Delta_1)$. Then again as before we see that $(Y_2, \Delta_2)$ is a log smooth pair with coefficients of $\Delta_2$ contained in $J$. Observe that if we continue blowing up this way, then this process will stop after a finitely many steps, since each step extracts an exceptional divisor $E_i$ over $X$ such that $a(E_i, X, \Delta)=a(E_i, X_i, \Delta_i)\<0$ and there are only finite many exceptional divisors over $X$ with this property. Assume that this process stabilizes at $g_n:X_n\to X_{n-1}$ for some $n\>1$. Rename $X_n$ by $X'$ and let $g:X'\to X$ be the composite of the all the morphisms $g_i, i=1,\ldots, n$. Write $K_{X'}+\Delta'=g^*(K_X+\Delta)$ and $\Delta'=\sum_{i=1}^{N'}d_iD_i'$. Then by our construction $(X', \Delta')$ is a log smooth pair such that $d_i\in J$ for all $i\in\{1,\ldots, N'\}$ and if $D_i'\cap D_j'\neq\emptyset$ for some $i, j\in\{1,\ldots, N'\}, i\neq j$, then $d_i+d_j-1<0$. Then from our claim in the second paragraph it follows that $(X', \Delta')$ has terminal singularity. This completes the proof.

\end{proof}

\begin{lemma}\label{lem:positive-dicrepancy}
	Let $(X, \Delta)$ be a log smooth pair of dimension $2$. Suppose that $\Delta=a_1D_1+a_2D_2+bD$, where $D_1, D_2$ and $D$ are prime Weil divisors, and $a_1, a_2, b$ are rational numbers such that $a_1, a_2<1$ and $b<0$. Assume that $D\cap D_1$ and $D\cap D_2$ are both non-empty. Let $p\in D$ be a closed point and $f:Y\to X$ is the blow up of $X$ at $p$. If $E$ is the exceptional divisor of $f$, then $a(E, X, \Delta)>0$. 
\end{lemma}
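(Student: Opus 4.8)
The plan is a direct local computation of $a(E,X,\Delta)$ via a case analysis on which of the three prime divisors $D_1,D_2,D$ pass through the blown-up point $p$. The crucial input is the log smoothness hypothesis: $\Supp\Delta$ is a simple normal crossing divisor on the smooth surface $X$, so at most two components of $\Delta$ meet at any closed point, and each component is smooth. Since $p\in D$, at most one of $D_1,D_2$ contains $p$; in particular the triple-point configuration $p\in D\cap D_1\cap D_2$ cannot occur, which is precisely the case that would otherwise cause trouble.

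First I would record the standard facts about the blow-up $f\colon Y\to X$ of a smooth surface at a point: $K_Y=f^*K_X+E$, and for any prime divisor $C$ on $X$ (automatically smooth at $p$ here), $f^*C=f^{-1}_*C+(\mult_pC)\,E$, with $\mult_pC=1$ if $p\in C$ and $\mult_pC=0$ otherwise. Pulling back $\Delta=a_1D_1+a_2D_2+bD$ component by component and comparing with the defining relation $K_Y+f^{-1}_*\Delta=f^*(K_X+\Delta)+a(E,X,\Delta)\,E$, one obtains $a(E,X,\Delta)=1-s$, where $s$ is the sum of the coefficients (among $a_1,a_2,b$) of those components passing through $p$.

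The three cases are then immediate. If $p\notin D_1\cup D_2$, only $D$ contributes, so $a(E,X,\Delta)=1-b>1>0$ using $b<0$. If $p\in D\cap D_1$ (hence $p\notin D_2$ by simple normal crossing), then $a(E,X,\Delta)=1-a_1-b$, which is positive since $a_1<1$ gives $1-a_1>0$ and $b<0$ gives $-b>0$. The case $p\in D\cap D_2$ is symmetric and yields $a(E,X,\Delta)=1-a_2-b>0$. This exhausts all possibilities. There is essentially no obstacle: the only point demanding care is the appeal to the simple normal crossing hypothesis to rule out $p$ lying on all three components and to guarantee that each $\mult_p$ equals $0$ or $1$ — which is exactly why the statement is formulated for a log smooth pair. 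I note in passing that the hypotheses $D\cap D_1\neq\emptyset$ and $D\cap D_2\neq\emptyset$ play no role in the positivity argument; they are included only because this is the form in which the lemma is applied inside the proof of Lemma \ref{lem:terminalization}.
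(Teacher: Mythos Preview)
Your argument is correct and matches the paper's proof exactly: a case analysis on which components pass through $p$, using the SNC hypothesis to rule out the triple point and to ensure each multiplicity is $0$ or $1$, yielding $a(E,X,\Delta)\in\{1-b,\;1-a_1-b,\;1-a_2-b\}$, all positive. Your observation that the hypotheses $D\cap D_i\neq\emptyset$ are unused in the positivity claim is also accurate.
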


\begin{proof}
A simple computation shows that
\[
	a(E, X, \Delta)=\begin{cases}
		(1-a_1-b)>0 & \mbox{ if } p\in D\cap D_1,\\
		(1-a_2-b)>0 & \mbox{ if } p\in D\cap D_2,\\
		(1-b)>0 & \mbox{ if } p\in D\setminus(D_1\cup D_2).
	\end{cases}
\]
\end{proof}

\section{Main Theorem}
In this section we prove our main theorem.\\

\begin{proof}[Proof of Theorem \ref{thm:main-theorem}]
	First of all replacing $I$ by $I\cup \{1-\frac{1}{n}\; :\; n\in\mbN\}\cup\{1\}$ we may assume that $I$ contains the standard set $\{1-\frac{1}{n}\; :\; n\in\mbN\}\cup\{1\}$. Then replacing $(X, \Delta)$ by a dlt model, we may assume that $(X, \Delta)$ is dlt for all $(X, \Delta)\in\mfD$. Let $f:Y\to X$ be a log resolution such that all the exceptional divisors have discrepancies $>-1$. Write $K_Y+\Delta_Y=f^*(K_X+\Delta)$ and decompose $\Delta_Y=\Delta_Y^{=1}+\Delta_Y^{<1}$. Since $K_Y+\Delta_Y$ is big and being big an open property, there is an integer $n\gg 0$ such that $K_Y+\Delta'_Y$ is still big, where $\Delta'_Y:=(1-\frac{1}{n})\Delta_Y^{=1}+\Delta_Y^{<1}$. Moreover, note that $H^0(Y, \mcO_Y(\lrd m(K_Y+\Delta'_Y)\rrd))\subset H^0(Y, \mcO_Y(\lrd m(K_Y+\Delta_Y)\rrd))=H^0(X, \mcO_X(\lrd m(K_X+\Delta)\rrd))$ for all $m\>1$. Therefore replacing $(Y, \Delta'_Y)$ by $(X, \Delta)$ we may assume that $(X, \Delta)$ is klt for all $(X, \Delta)\in\mfD$.   
	Let $g:X'\to X$ be a log resolution of $(X, \Delta)$ and
	 \[
		K_{X'}+g^{-1}_*\Delta+\sum e_iE_i=g^*(K_X+\Delta).
	\] 
Since $(X, \Delta)$ is klt, $e_i<1$ for all $i$. So there is a postive integer $n>0$ such that $e_i<(1-\frac{1}{n})$ for all $i$.  Now define $\Delta':=g^{-1}_*\Delta+\sum (1-\frac{1}{n})E_i$. Then $H^0(X', \mcO_{X'}(\lrd m(K_{X'}+\Delta')\rrd))=H^0(X, \mcO_X(\lrd m(K_X+\Delta)\rrd))$ for all $m\>1$. Therefore replacing $(X, \Delta)$ by $(X', \Delta')$ we may assume that $(X, \Delta)$ is a log smooth pair. Now by \cite[Lemma 2.4]{HK19} (also see \cite[Theorem 4.6]{AM04} and \cite[Theorem 7.5]{Ale94})	there is a $0<\delta<1$ depending only on the set $I$ such that $K_X+(1-\delta)\Delta$ is big for all $(X, \Delta)\in\mfD$. Let $a>0$ be the minimum of the set $I\setminus\{0\}$ and $k:=\lru\frac{1}{a\delta}\rru$. Write $\Delta=\sum a_iD_i$ and define $a_i':=\frac{\lrd ka_i\rrd}{k}$ and $\Delta':=\sum a_i'D_i$. Then from Lemma \ref{lem:modifying-divisor} it follows that $(1-\delta)\Delta\<\Delta'\<\Delta$. Therefore $K_X+\Delta'$ is big and the coefficients of $\Delta'$ are contained in the finite set $J:=\{\frac{\ell}{k}: \ell=1, 2,\ldots, k-1\}$. Note that $H^0(X, \mcO_X(\lrd m(K_X+\Delta')\rrd))\subset H^0(X, \mcO_X(\lrd m(K_X+\Delta)\rrd))$ for all $m\>1$. Thus it is enough to prove the theorem for the pairs $(X, \Delta')$. Let $\mfD'$ be the collection of all such pairs $(X, \Delta')$. Now replacing $(X, \Delta')$ by its terminalization as in the Lemma \ref{lem:terminalization} we may assume that $(X, \Delta')$ is a log smooth terminal pair for all $(X, \Delta')\in\mfD'$. Note that since coefficients of $\Delta'$ are contained in a fixed finite set $J$ for all $(X, \Delta')\in\mfD'$, from \cite[Corollary 2.31(3)]{KM98} it follows that there is an $\ve>0$ depending only on the set $J$ (in particular on the set $I$) such that $(X, \Delta')$ is $\ve$-klt for all $(X, \Delta')\in\mfD'$. Then by Theorem \ref{thm:bounding-components} there is a positive integer $A(J, \ve)$ depending only on the set $J$ and $\ve$ (in particular on the set $I$) such that $\nt(\Delta'-\phi(\Delta'))\<A(J, \ve)$ for all pairs $(X, \Delta)\in\mfD'$. Therefore replacing $(X, \Delta'-\phi(\Delta'))$ by $(X, \Delta')$ we may assume that the number of components of $\Delta'$ is bounded from above for all $(X, \Delta')\in\mfD'$.\\
Now we run a $(K_X+\Delta')$-MMP and end with a minimal model $(X'', \Delta'')$, i.e. $K_{X''}+\Delta''$ is nef and big. Let $\mfD''$ be the collection of all such minimal models $(X'', \Delta'')$ for all $(X, \Delta')\in\mfD'$. We will show that the number of exceptional divisors over $X''$ with negative discrepancy is bounded above. To that end recall that $(X, \Delta')$ is terminal, so by Proposition \ref{pro:bounding-exceptional-divisors} the number of exceptional divisors over $X''$ with negative discrepancies with respect to $(X'', \Delta'')$ is bounded above by the number $A(J, \ve)$ defined above.\\
Now by Theorem \ref{thm:bounding-Cartier-index} there is a natural number $N$ depending only on the set $J$ and $\ve>0$ such that $N(K_{X''}+\Delta'')$ is Cartier for all $(X'', \Delta'')\in\mfD''$. Since $\vol(N(K_{X''}+\Delta''))\>\vol(K_{X''})$, by Lemma \ref{lem:birational-bound} the linear system $|K_{X''}+q(N(K_X''+\Delta''))|$ gives a birational map for all $q\>18$ and for all $(X'', \Delta'')\in\mfD''$. Replacing $N$ by $18N$ we may assume that $|K_{X''}+qN(K_{X''}+\Delta'')|$ is biratinal for all $q\>1$. Thus $|\lrd (qN+1)(K_{X''}+\Delta'')\rrd|$ gives a birational map for all $q\>1$. Then by Lemma \ref{lem:effective-bound} $|\lrd m(K_{X''}+\Delta'')\rrd|$ is birational for all $m\>(N^2+1)$ and for all $(X'', \Delta'')\in\mfD''$. This shows that $|\lrd m(K_X+\Delta')\rrd|$ gives a birational map for all $m\>m_0$. Consequently, $|\lrd m(K_X+\Delta)\rrd|$ gives a birational map for all $m\>(N^2+1)$ and for all $(X, \Delta)\in\mfD$, where $N$ depends only on the set $I$.
	
\end{proof}

\begin{lemma}\label{lem:effective-bound}
	Let $(X, \Delta\>0)$ be log pair and $N>0$ is a positive integer such that $N(K_X+\Delta)$ is Cartier. Assume that the linear system $|\lrd (qN+1)(K_X+\Delta)\rrd|$ gives a birational map onto its image for all $q\>1$. Then $|\lrd m(K_X+\Delta)\rrd|$ gives a birational map for all $m\>(N^2+1)$.
\end{lemma}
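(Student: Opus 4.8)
The plan is to reduce an arbitrary exponent $m\>N^2+1$ to one of the ``good'' exponents $qN+1$, for which $|\lrd(qN+1)(K_X+\Delta)\rrd|$ is known to be birational, by means of a division‑with‑remainder identity together with the standard fact that enlarging a birational linear system by a fixed effective divisor preserves birationality. Throughout write $L:=K_X+\Delta$, so $NL$ is Cartier and, by hypothesis, $|\lrd(qN+1)L\rrd|$ is birational onto its image for every integer $q\>1$.

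First I would fix $m\>N^2+1$ and divide $m-1$ by $N$: write $m-1=\alpha N+\beta$ with $0\<\beta\<N-1$. Since $m-1\>N^2$ we get $\alpha=\lrd(m-1)/N\rrd\>N$, hence $q:=\alpha-\beta\>N-(N-1)=1$; this is the one and only place the bound $m\>N^2+1$ is used. A one‑line computation then yields
\[
	m=(qN+1)+\beta(N+1),
\]
and both $qN+1$ (with $q\>1$) and $N+1$ (the case $q=1$) are exponents for which the corresponding log‑pluricanonical system is birational by hypothesis.

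Next I would record two elementary divisor inequalities, valid coefficientwise for arbitrary real coefficients (so that nothing about the signs of the coefficients of a canonical representative is needed): $\beta\lrd(N+1)L\rrd\<\lrd\beta(N+1)L\rrd$, and $\lrd(qN+1)L\rrd+\lrd\beta(N+1)L\rrd\<\lrd mL\rrd$, the latter from $\lrd x\rrd+\lrd y\rrd\<\lrd x+y\rrd$ applied with $x=(qN+1)L$, $y=\beta(N+1)L$. Since $|\lrd(N+1)L\rrd|$ is birational, $\lrd(N+1)L\rrd$ is linearly equivalent to an effective divisor, hence so are $\beta\lrd(N+1)L\rrd$ and therefore $\lrd\beta(N+1)L\rrd$; thus $H^0(X,\mcO_X(\lrd\beta(N+1)L\rrd))\neq 0$ (the case $\beta=0$ being trivial, as $H^0(X,\mcO_X)\neq 0$). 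The second inequality gives a sheaf inclusion $\mcO_X(\lrd(qN+1)L\rrd+\lrd\beta(N+1)L\rrd)\injective\mcO_X(\lrd mL\rrd)$.

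Finally I would assemble the pieces: choosing $0\neq t\in H^0(X,\mcO_X(\lrd\beta(N+1)L\rrd))$ and multiplying by $t$, followed by the sheaf inclusion above, produces an injection $H^0(X,\mcO_X(\lrd(qN+1)L\rrd))\injective H^0(X,\mcO_X(\lrd mL\rrd))$ whose image is the linear subsystem of $|\lrd mL\rrd|$ obtained from $|\lrd(qN+1)L\rrd|$ by adding a fixed effective divisor; this subsystem therefore defines the same rational map as $|\lrd(qN+1)L\rrd|$, which is birational onto its image. Since the rational map attached to $|\lrd mL\rrd|$, composed with a suitable linear projection of projective spaces, equals the rational map of this subsystem, it is generically injective, whence $|\lrd mL\rrd|$ is birational onto its image. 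I do not anticipate a genuine obstacle here; the only points requiring care are to phrase everything through sections and the sheaf inclusions $\mcO_X(D)\subseteq\mcO_X(D')$ for $D\<D'$ (so no positivity of the coefficients of $K_X+\Delta$ is invoked), and to recall that $\mcO_X(\lrd mL\rrd)$ is independent of the chosen representative of $K_X$.
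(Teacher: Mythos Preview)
Your proof is correct and takes essentially the same route as the paper: both express $m=(qN+1)+c(N+1)$ with $q\geq 1$ and $c\geq 0$ (you by dividing $m-1$ by $N$, the paper by writing $m=N^2+k$ and dividing $k$ by $N+1$) and then absorb the $c(N+1)$ part as a fixed effective contribution. Your write-up is in fact more complete, since you spell out the floor inequalities and the sub-linear-system argument that the paper's proof leaves implicit in the phrase ``these calculations clearly show.''
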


\begin{proof}
Set $q=N$ and choose a positive integer $k\>1$. Then by the division algorithm we have $k=(N+1)a+b$, where $0\<b\<N$. Therefore $N^2+k=(N^2+b)+a(N+1)$. Now if $b=0$, then $a\>1$ since $k>0$ and we can rewrite $N^2+k$ as $N^2+k=[(N+1)N+1]+(a-1)(N+1)$. Otherwise $b\>1$ and $N^2+b$ can be written as $N^2+b=(b-1)(N+1)+[(N-b+1)N+1]$. These calculations clearly show that $|\lrd m(K_X+\Delta)\rrd|$ gives a birational map for all $m\>(N^2+1)$. 
\end{proof}

\bibliographystyle{habbrv}
\bibliography{references.bib}

\end{document}